\sodef\spred{}{.2em}{.9em plus.4em}{1em plus.1em minus.1em}
\newbox\mybox
\def\overtag#1#2#3{\setbox\mybox\hbox{$#1$}\hbox to
  0pt{\vbox to 0pt{\vglue-#3\vglue-\ht\mybox\hbox to \wd\mybox
      {\hss$\ss#2$\hss}\vss}\hss}\box\mybox}
\def\undertag#1#2#3{\setbox\mybox\hbox{$#1$}\hbox to 0pt{\vbox to
    0pt{\vglue#3\vglue\ht\mybox\hbox to \wd\mybox
      {\hss$\ss#2$\hss}\vss}\hss}\box\mybox}
\def\lefttag#1#2#3{\hbox to 0pt{\vbox to 4pt{\vss\hbox to
      0pt{\hss$\ss#2$\hskip#3}\vss}}#1}
\def\blefttag#1#2#3{\hbox to 0pt{\vbox to 4pt{\vss\hbox to
      0pt{\hss$#2$\hskip#3}\vss}}#1}
\def\righttag#1#2#3{\hbox to 0pt{\vbox to 4pt{\vss\hbox to
      0pt{\hskip#3$\ss#2$\hss}\vss}}#1}
\let\ss\scriptstyle
\def\Dot{\lower.2pt\hbox to 3.5pt{\hss$\bullet$\hss}}
\def\Circ{\lower.2pt\hbox to 3.5pt{\hss$\circ$\hss}}
\def\splicediag#1#2{\xymatrix@R=#1pt@C=#2pt@M=0pt@W=0pt@H=0pt}
\newcommand\lineto{\ar@{-}}
\newcommand\dashto{\ar@{--}}
\newcommand\dotto{\ar@{.}}
\newtheorem{thm}{Theorem}[section]
\newtheorem{lemma}[thm]{Lemma}
\newtheorem{prop}[thm]{Proposition}
\theoremstyle{definition} 
\newtheorem{defn}[thm]{Definition} 
\newtheorem{ex}[thm]{Example}
\newcommand{\C}{\mathbbm{C}}
\newcommand{\R}{\mathbbm{R}}
\newcommand{\norm}[1]{\lVert #1 \rVert}
\newcommand{\num}[1]{\lvert #1 \rvert}
\newcommand{\morf}[4][\to]{ #2 \colon #3 #1 #4}
\newcommand{\inv}{^{-1}}
\newcommand{\M}[1]{M^{ #1 }_{m,n}}
\newcommand{\rank}{\operatorname{rank}}
\newcommand{\codim}{\operatorname{codim}}
\newcommand{\Gl}{\operatorname{GL}}
\renewcommand{\phi}{\varphi}
\renewcommand{\epsilon}{\varepsilon}
\begin{document}

\bibliographystyle{alpha}

\title{Lipschitz Normal Embeddings and Determinantal Singularities}
\author{Helge M\o{}ller Pedersen}
\address{ICMC-USP}
\author{Maria A. S. Ruas}
\address{ICMC-USP}
\email{helge@imf.au.dk}
\email{maasruas@icmc.usp.br}
\keywords{Lipschitz geometry, Determinantal singularities}
\subjclass[2000]{14B05, 32S05, 32S25; 57M99}
\begin{abstract} 
The germ of an algebraic variety is naturally equipped with two
different metrics up to bilipschitz equivalence. The inner metric and
the outer metric. One calls a germ of a variety Lipschitz normally
embedded if the two metrics are bilipschitz equivalent. In this
article we prove that the model determinantal singularity, that is the
space of $m\times n$ matrices of rank less than a given number, is
Lipschitz normally embedded. We will also discuss some of the
difficulties extending this result to the case of general
determinantal singularities. 
\end{abstract}
\maketitle

\section{Introduction}

If $(X,0)$ is the germ of an algebraic (analytic) variety, then one
can define two natural metrics on it. Both are defined by choosing an
embedding of $(X,0)$ into $(\C^N,0)$. The first is the \emph{outer
  metric}, where the distance between two points $x,y\in X$ is given by
$d_{out}(x,y) := \norm{x-y}_{\C^N}$, so just the restriction of the
Euclidean metric to $(X,0)$. The other is the \emph{inner metric},
where the distance is defined as $d_{in}(x,y) := \inf_{\gamma}
\big{\{} length_{\C^N}(\gamma)\ \big{\vert}\ \morf{\gamma}{[0,1]}{X} \text{
  is a rectifiable curve, } \gamma(0) = x \text{ and }
\gamma(1) = y \big{\}}$. Both of these metrics are independent of the
choice of the embedding up to bilipschitz equivalence. The outer metric
determines the inner metric, and it is clear that $d_{out}(x,y) \leq
d_{in} (x,y)$. The other direction is in general not true, and we say
that $(X,0)$ is \emph{Lipschitz normally embedded} if the inner and
outer metric are bilipschitz equivalent. \emph{Bilipschitz geometry}
is the study of the bilipschitz equivalence classes of these two
metrics. Now one can of course define
the inner and outer metric for any embedded subspace of Euclidean space, but the
bilipschitz class might in this case depend of the embeddings.

In January 2016 Asuf Shachar asked the following question on
Mathoverflow.org: Is the Lie group $\Gl_n^+(\R)$ Lipschitz normally
embedded, where $\Gl_n^+(\R)$ is the group of $n\times n$ matrices
with positive determinants. A positive answer  was given by Katz,
Katz, Kerner, Liokumovich and Solomon
in \cite{kerneretc}. They first prove it for the
\emph{model determinantal singularity} $M^n_{n,n}$ (they call it the
determinantal
singularity), that is the set of $n\times n$ matrices with determinant
equal to zero. Then they replace the segments of the straight line
between two points of $\Gl_n^+(\R)$ that passes trough $\Gl_n^-(\R)$ with a line
arbitrarily close to $M^n_{n,n}$. Their proof relies on topological
arguments, and some results on conical stratifications of MacPherson
and Procesi \cite{macphersonprocesi}. In this article we give an
alternative proof relying only on linear algebra and simple
trigonometry, which also works for all model determinantal
singularities. We will also discuss the case of general determinantal
singularities, by giving some examples of determinantal singularities
that are not Lipschitz normally embedded, and then discussing some
additional assumptions on a determinantal singularity that might imply
it is Lipschitz normal embedded.    

This work is in the intersection of two areas that have seen a lot of
interest lately, namely bilipschitz geometry and determinantal
singularities. The study of bilipschitz geometry of complex spaces
started with Pham and Teissier that studied the case of curves in
\cite{phamteissier}. It then lay dormant for long time until Birbrair
and Fernandes began studying the case of complex surfaces
\cite{birbrairfernandes}. Among important recent results are the
complete classification of the inner metric of surfaces by Birbrair,
Neumann and Pichon \cite{thickthin}, the proof that Zariski
equisingularity is equivalent to bilipschitz triviality in the case of
surfaces by Neumann and Pichon \cite{zariski} and the proof that
outer Lipschitz regularity implies smoothness by Birbrair,
Fernandes, L\^{e} and Sampaio
\cite{lipschitzregularity}. Determinantal singularity is also an area
that has been around for along time, that recently saw a lot of
interest. They can be seen as a generalization of ICIS, and the recent
results have mainly been in the study of invariants coming from their
deformation theory. In \cite{ebelingguseinzade} \'Ebeling and
Guse{\u\i}n-Zade defined the index of a $1$-form, and the Milnor
number have been defined in various different ways by Ruas and da
Silva Pereira \cite{cedinhamiriam}, Damon and Pike \cite{damonpike}
and Nu\~no-Ballesteros, Or\'efice-Okamoto and Tomazalla
\cite{NunoOreficeOkamotoTomazella}. Their
deformation theory have also been studied by Gaffney and Rangachev
\cite{gaffenyrangachev} and Fr\"uhbis-Kr\"uger and Zach
\cite{fruhbiskrugerzach}.

This article is organized as follows. In section \ref{preliminaries}
we discuss the basic notions of Lipschitz normal embeddings and
determinantal singularities and give some results concerning when a
space is Lipschitz normally embedded. In section \ref{modelcase} we
prove the main theorem, that model determinantal singularities are
Lipschitz normally embedded. Finally in section \ref{secgeneralcase} we
discuss some of the difficulties to extend this result to the settings
of general determinantal singularities.

\section{Preliminaries on bilipschitz geometry and determinantal
  singularities}\label{preliminaries}

\subsection*{Lipschitz normal embeddings}

In this section we discuss some properties of Lipschitz normal
embeddings. We will first give the definition of Lipschitz normally
embedding we will work with.
\begin{defn}
We say that $X$ is \emph{Lipschitz normally embedded} if there exist
$K>1$ such that for all $x,y\in X$,
\begin{align}
d_{in}(x,y)\leq Kd_{out}(x,y).\label{lneeq}
\end{align}
We call a $K$ that satisfies the inequality \emph{a bilipschitz
  constant of} $X$.
\end{defn}

A trivial example of a Lipschitz normally embedded set is $\C^n$. For
an example of a space that is not Lipschitz normally embedded,
consider the plane curve given by $x^3-y^2=0$, then
$d_{out}((t^2,t^3),(t^2,-t^3))=2\num{t}^{\tfrac{3}{2}}$ but the
$d_{in}((t^2,t^3),(t^2,-t^3))= 2\num{t}+ o(t)$, this implies that
$\tfrac{d_{in}((t^2,t^3),(t^2,-t^3))}{d_{out}((t^2,t^3),(t^2,-t^3))}$
is unbounded as $t$ goes to $0$, hence there cannot exist a $K$
satisfying \eqref{lneeq}.

Pham and Teissier \cite{phamteissier} show that in general the outer
geometry of a complex plane curve is equivalent to its embedded
topological type, and the inner geometry is equivalent to the abstract
topological type. Hence a plane curve is Lipschitz normally embedded
if and only if it is a union of smooth curves intersecting
transversely. See also Fernandes \cite{fernandesplanecurve}.

In the cases of higher dimension the question of which singularities
are Lipschitz normally embedded becomes much more complicated. It is no
longer only rather trivial singularities that are Lipschitz normally
embedded, for example in the case of surfaces the first author
together with Neumann and Pichon, shows that rational surface
singularities are Lipschitz normally embedded if and only if they are
minimal \cite{normallyembedded}. As we will later see, determinantal
singularities give examples of non trivial Lipschitz normally
embedded singularities in arbitrary dimensions.

We will next give a couple of results about when spaces constructed from
Lipschitz normally embedded spaces are themselves Lipschitz normally
embedded. First is the case of product spaces.

\begin{prop}\label{product}
Let $X\subset \R^n$ and $Y\subset \R^m$ and let $Z = X\times Y
\subset \R^{n+m}$. $Z$ is Lipschitz normally embedded if and only if
$X$ and $Y$ are Lipschitz normally embedded.
\end{prop}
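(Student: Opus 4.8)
The plan is to establish the two metrics on $Z$ are comparable to natural "product" metrics built from the metrics on $X$ and $Y$, and then reduce the bilipschitz equivalence on $Z$ to the bilipschitz equivalence on each factor. The key observation is that for the outer metric, if $z_1 = (x_1,y_1)$ and $z_2 = (x_2,y_2)$ are points of $Z$, then $d_{out}^Z(z_1,z_2)^2 = d_{out}^X(x_1,x_2)^2 + d_{out}^Y(y_1,y_2)^2$ exactly, since the Euclidean metric on $\R^{n+m}$ splits as an orthogonal sum. For the inner metric, I claim an analogous comparison up to a universal constant: $d_{in}^Z(z_1,z_2)$ is bilipschitz equivalent (with constants independent of everything) to $d_{in}^X(x_1,x_2) + d_{in}^Y(y_1,y_2)$. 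One direction is easy: concatenating a rectifiable path in $X$ from $x_1$ to $x_2$ (at fixed second coordinate $y_1$) with a rectifiable path in $Y$ from $y_1$ to $y_2$ (at fixed first coordinate $x_2$) gives a path in $Z$, so $d_{in}^Z(z_1,z_2) \leq d_{in}^X(x_1,x_2) + d_{in}^Y(y_1,y_2)$. For the reverse direction, given any rectifiable path $\gamma = (\gamma_X,\gamma_Y)$ in $Z$, its projections $\gamma_X$ and $\gamma_Y$ are rectifiable paths in $X$ and $Y$ respectively with $length(\gamma_X) \leq length(\gamma)$ and $length(\gamma_Y) \leq length(\gamma)$; combined with $d_{in}^X(x_1,x_2) \leq length(\gamma_X)$ and similarly for $Y$, this yields $d_{in}^X(x_1,x_2) + d_{in}^Y(y_1,y_2) \leq 2\, d_{in}^Z(z_1,z_2)$, so the two quantities are comparable within a factor of $2$.

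Granting these comparisons, the proposition follows by elementary estimates. For the "if" direction, suppose $X$ and $Y$ are Lipschitz normally embedded with bilipschitz constants $K_X$ and $K_Y$, and set $K = 2\max\{K_X, K_Y\}$. Then
\[
d_{in}^Z(z_1,z_2) \leq d_{in}^X(x_1,x_2) + d_{in}^Y(y_1,y_2) \leq K_X\, d_{out}^X(x_1,x_2) + K_Y\, d_{out}^Y(y_1,y_2),
\]
and since $d_{out}^X(x_1,x_2) \leq d_{out}^Z(z_1,z_2)$ and $d_{out}^Y(y_1,y_2) \leq d_{out}^Z(z_1,z_2)$, the right-hand side is at most $(K_X + K_Y)\, d_{out}^Z(z_1,z_2)$, proving $Z$ is Lipschitz normally embedded. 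For the "only if" direction, suppose $Z$ is Lipschitz normally embedded with constant $K$, and fix a basepoint $y_0 \in Y$; embed $X$ as the slice $X \times \{y_0\} \subset Z$. On this slice both metrics are canonically identified with those of $X$: the outer metric obviously, and the inner metric because, by the comparison above, a path in $Z$ between two points of the slice has length at least $d_{in}^X(x_1,x_2)$ on its $X$-projection while the $Y$-projection contributes nonnegatively — more carefully, one should check that the infimum of $Z$-path-lengths between slice points equals $d_{in}^X$, which again follows from the projection argument since $\gamma_Y$ is a loop and $length(\gamma) \geq length(\gamma_X) \geq d_{in}^X(x_1,x_2)$, with equality approached by horizontal paths. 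Hence the inequality $d_{in}^Z \leq K d_{out}^Z$ restricted to the slice gives $d_{in}^X \leq K d_{out}^X$, so $X$ is Lipschitz normally embedded, and symmetrically for $Y$.

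The step I expect to require the most care is the lower bound $d_{in}^X(x_1,x_2) + d_{in}^Y(y_1,y_2) \leq 2\, d_{in}^Z(z_1,z_2)$, specifically the claim that the projection of a rectifiable curve in $Z$ onto each factor is rectifiable with no greater length. This is intuitively clear since projection $\R^{n+m} \to \R^n$ is $1$-Lipschitz, but one should phrase it carefully at the level of the definition of length as a supremum over partitions: for any partition $0 = t_0 < t_1 < \cdots < t_k = 1$, one has $\sum \norm{\gamma_X(t_{i+1}) - \gamma_X(t_i)} \leq \sum \norm{\gamma(t_{i+1}) - \gamma(t_i)} \leq length(\gamma)$, and taking the supremum over partitions gives $length(\gamma_X) \leq length(\gamma)$; moreover $\gamma_X$ stays in $X$ because $\gamma$ stays in $Z = X \times Y$. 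There is also a minor point in the "only if" direction — verifying that restricting to a slice genuinely recovers $d_{in}^X$ rather than something smaller — but this too is handled by the same projection estimate. None of these obstacles is serious; the proposition is essentially a bookkeeping exercise once the product structure of both metrics is made explicit.
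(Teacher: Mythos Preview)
Your proof is correct and follows essentially the same route as the paper: for the ``if'' direction you bound $d_{in}^Z$ by $d_{in}^X + d_{in}^Y$ via concatenation along slices, apply the LNE constants of the factors, and use $d_{out}^X, d_{out}^Y \leq d_{out}^Z$ to get the constant $K_X + K_Y$; for the ``only if'' direction you restrict to a slice $X \times \{y_0\}$ and use the $1$-Lipschitz projection to show $d_{in}^X \leq d_{in}^Z$ there, exactly as the paper does. Your additional two-sided comparison $\tfrac{1}{2}(d_{in}^X + d_{in}^Y) \leq d_{in}^Z \leq d_{in}^X + d_{in}^Y$ is a nice unifying observation but not strictly needed, and your treatment is slightly more careful than the paper's in using an arbitrary basepoint $y_0$ rather than assuming $0 \in Y$.
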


\begin{proof}
First we prove the ``if'' direction.
Let $(x_1,y_1),(x_2,y_2)\in X\times Y$. We need to show that
$d_{in}^{X\times Y}((x_1,y_1)(x_2,y_2))\leq K d_{out}^{X\times
  Y}((x_1,y_1)(x_2,y_2))$. Let $K_X$ be the constant such that
$d_{in}^X(a,b) \leq K_X d_{out}^X(a,b)$ for all $a,b\in X$, and let $K_Y$
be the constant such that $d_{in}(a,b)^Y \leq K_Y d_{out}(a,b)^Y$ for all
$a,b\in Y$. We get, using the triangle inequality, that
\begin{align*}
d_{in}^{X\times Y}((x_1,y_1)(x_2,y_2))\leq d_{in}^{X\times
  Y}((x_1,y_1)(x_1,y_2))+ d_{in}^{X\times Y}((x_1,y_2)(x_2,y_2)). 
\end{align*}
Now the points $(x_1,y_1)$ and $(x_1,y_2)$ both lie in the slice
$\{x_1\}\times Y$ and hence $d_{in}^{X\times
  Y}((x_1,y_1)(x_1,y_2)) \leq d_{in}^{Y}(y_1,y_2)$ and likewise we have
  $d_{in}^{X\times 
  Y}((x_1,y_2)(x_2,y_2)) \leq d_{in}^{X}(x_1,x_2)$. This then implies that
\begin{align*}
d_{in}^{X\times Y}((x_1,y_1)(x_2,y_2))\leq K_Y d_{out}^{Y}(y_1,y_2)+
K_X d_{out}^{X}(x_1,x_2),
\end{align*}
where we use that $X$ and $Y$ are Lipschitz normally embedded. Now it
is clear that $d_{out}^{X\times Y}((x_1,y_1)(x_1,y_2)) =
d_{out}^{Y}(y_1,y_2)$ and $d_{out}^{X\times Y}((x_1,y_2)(x_2,y_2)) =
d_{out}^{X}(x_1,x_2)$. Also, since $d_{out}^{X\times
  Y}((x_1,y_1)(x_2,y_2))^2=d_{out}^{Y}(y_1,y_2)^2+
d_{out}^{X}(x_1,x_2)^2$ by definition of the product metric, we have
that $d_{out}^{X\times Y}((x_1,y_1)(x_1,y_2)) \leq d_{out}^{X\times
  Y}((x_1,y_1)(x_2,y_2))$ and  $d_{out}^{X\times
  Y}((x_1,y_2)(x_2,y_2)) \leq d_{out}^{X\times
  Y}((x_1,y_1)(x_2,y_2))$. It then follows that
\begin{align*}
d_{in}^{X\times Y}((x_1,y_1)(x_2,y_2))\leq (K_Y + K_X) d_{out}^{X\times
  Y}((x_1,y_1)(x_2,y_2)).
\end{align*} 
For the other direction, let $p,q \in X$ consider any path
$\morf{\gamma}{ [0,1] }{Z}$ such that $\gamma(0) = (p,0)$ and
$\gamma(1) = (q,0)$. Now $\gamma(t)= \big(\gamma_X(t),\gamma_Y(t)\big)$ where
$\morf{\gamma_X}{ [0,1] }{X}$ and $\morf{\gamma_Y}{ [0,1] }{Y}$ are
paths and $\gamma_X(0) = p$ and $\gamma_X(1) =q$. Now $l(\gamma)\geq
l(\gamma_X)$, hence $d_{in}^X(p,q) \leq d_{in}^Z((p,0),(q,0))$. Now
$Z$ is Lipschitz normally embedded, so there exist a $K>1$ such that
$d_{in}^Z(z_1,z_2)\leq K d_{out}(z_1,z_2)$ for all $z_1.z_2\in Z$. We also have that
$d_{out}^Z((p,0),(q,0))= d_{out}^X (p,q)$, since $X$ is embedded in $Z$ as
  $X\times \{ 0\}$. Hence $d_{in}^X(p,q) \leq K d_{out}^X (p,q)$. The
  argument for $Y$ being Lipschitz normally embedded is the same
  exchanging $X$ with $Y$.
\end{proof}

An other case we will need later is the case of cones.

\begin{prop}\label{cone}
Let $X$ be the cone over $M$, then
$X$ is Lipschitz normally embedded if and only if $M$ is Lipschitz
normally embedded. 
\end{prop}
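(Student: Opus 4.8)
The plan is to work with a fixed realization of the cone: choosing an embedding we may assume $M\subseteq S^{N-1}\subseteq\R^N$ and that $X=\{rx : r\geq 0,\ x\in M\}$ is the Euclidean cone, so that $M=X\cap S^{N-1}$ and the restriction of $d_{out}^X$ to $M$ equals $d_{out}^M$. Everything rests on one identity: for $p=r_1x_1$ and $q=r_2x_2$ with $x_i\in M$ and $r_i\geq 0$, the relation $\langle x_1,x_2\rangle=1-\tfrac12\norm{x_1-x_2}^2$ gives
\begin{align*}
d_{out}^X(p,q)^2=\norm{r_1x_1-r_2x_2}^2=(r_1-r_2)^2+r_1r_2\norm{x_1-x_2}^2 .
\end{align*}
In particular $\num{r_1-r_2}\leq d_{out}^X(p,q)$ and $\sqrt{r_1r_2}\,\norm{x_1-x_2}\leq d_{out}^X(p,q)$; these two inequalities drive both directions.

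For the ``if'' direction, assume $M$ is Lipschitz normally embedded with constant $K_M$. I would join $p=r_1x_1$ to $q=r_2x_2$ inside $X$ by three pieces: the radial segment from $r_1x_1$ to $\sqrt{r_1r_2}\,x_1$; then the $\sqrt{r_1r_2}$-rescaled image of a path in $M$ from $x_1$ to $x_2$ of length close to $d_{in}^M(x_1,x_2)$; then the radial segment from $\sqrt{r_1r_2}\,x_2$ to $r_2x_2$. The two radial segments together have length $(\sqrt{r_1}+\sqrt{r_2})\num{\sqrt{r_1}-\sqrt{r_2}}=\num{r_1-r_2}$, and the dilation $v\mapsto\sqrt{r_1r_2}\,v$ multiplies lengths by $\sqrt{r_1r_2}$, so letting the approximation error tend to $0$ yields
\begin{align*}
d_{in}^X(p,q)\leq\num{r_1-r_2}+\sqrt{r_1r_2}\,d_{in}^M(x_1,x_2)\leq\num{r_1-r_2}+\sqrt{r_1r_2}\,K_M\norm{x_1-x_2}\leq(1+K_M)\,d_{out}^X(p,q).
\end{align*}

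For the ``only if'' direction, assume $X$ is Lipschitz normally embedded with constant $K_X$ and take $x_1,x_2\in M$. Since $d_{out}^X(x_1,x_2)=\norm{x_1-x_2}$, there is a rectifiable path $\gamma$ in $X$ from $x_1$ to $x_2$ of length arbitrarily close to $d_{in}^X(x_1,x_2)\leq K_X\norm{x_1-x_2}$. I want to push $\gamma$ into $M$ by the radial projection $\morf{\pi}{X\setminus\{0\}}{M}$, $\pi(x)=x/\norm{x}$, which satisfies $\norm{d\pi_x(v)}\leq\norm{v}/\norm{x}$. The obstacle is that $\pi$ expands lengths without bound near the cone point, so one must show a near-minimizing $\gamma$ stays away from it: any path between two points of $S^{N-1}$ that reaches radius $\rho$ has length at least $2(1-\rho)$, because its length dominates the total variation of $\norm{\gamma(t)}$. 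Hence if $\norm{x_1-x_2}<\tfrac1{2K_X}$ we may take $l(\gamma)<\tfrac12$, which forces $\gamma\subseteq\{\norm{x}>\tfrac34\}$, so $\pi\circ\gamma$ is a path in $M$ with $l(\pi\circ\gamma)\leq\tfrac43\,l(\gamma)$, giving $d_{in}^M(x_1,x_2)\leq\tfrac43K_X\norm{x_1-x_2}$. For pairs with $\norm{x_1-x_2}\geq\tfrac1{2K_X}$ one uses that $M$ is compact and connected, so its inner diameter $D$ is finite and $d_{in}^M(x_1,x_2)\leq D\leq 2DK_X\norm{x_1-x_2}$; thus $\max\{\tfrac43K_X,2DK_X\}$ is a bilipschitz constant for $M$.

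The step I expect to be the real obstacle is exactly this near-vertex control in the ``only if'' direction; everything else is bookkeeping with the displayed identity. The far-apart case genuinely uses compactness and connectedness of $M$, which is why the statement is naturally read with $M$ a compact connected subset of a sphere — the relevant setting, since in the applications $M$ is the intersection of a homogeneous cone with $S^{N-1}$.
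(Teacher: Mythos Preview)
Your argument is correct and the overall strategy is sound; it is, however, genuinely different from the paper's in both directions.

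For the ``if'' direction, the paper uses a two-piece path: starting from the point $y$ of smaller norm, it moves radially out to $y'=\tfrac{y}{\norm{y}}\norm{x}$ and then travels within the copy of $M$ at radius $\norm{x}$. The key estimate $d_{out}(y,y'),\,d_{out}(y',x)\leq d_{out}(x,y)$ is obtained geometrically, by observing that $y'$ is the nearest point to $y$ on that scaled copy of $M$, so the angle at $y'$ exceeds $\pi/2$. You instead route through the geometric-mean radius $\sqrt{r_1r_2}$ with a three-piece path and extract the same bounds from the algebraic identity $\norm{r_1x_1-r_2x_2}^2=(r_1-r_2)^2+r_1r_2\norm{x_1-x_2}^2$. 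Both arrive at the constant $K_M+1$; your version is a bit more self-contained, while the paper's avoids the auxiliary midpoint.

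For the ``only if'' direction the approaches diverge more substantially. The paper argues by contradiction and localizes: if $M$ failed to be Lipschitz normally embedded near some $p$, then by Proposition~\ref{product} the product $U\times(-\epsilon,\epsilon)$ would also fail, and the cone map is an outer bilipschitz equivalence onto a neighbourhood of $p$ in $X$, contradicting $X$ being Lipschitz normally embedded. Your route is direct: project near-minimizing $X$-paths radially onto $M$, after first showing such paths cannot approach the vertex (via the total-variation bound on $\norm{\gamma(t)}$), and handle distant pairs with the finite inner diameter of $M$. Your approach is more elementary in that it does not invoke the product proposition, but it does require the additional hypothesis that $M$ be connected with finite inner diameter --- which you rightly flag, and which holds in the intended setting of links of homogeneous varieties. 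The paper's proof, by working locally, sidesteps the connectedness issue but leans on the earlier product result.
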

\begin{proof}
We first prove that $M$ Lipschitz normally embedded implies that $X$
is Lipschitz normally embedded.

Let $x,y\in X$ and assume that $\norm{x}\geq\norm{y}$. First if $x=0$
(the cone point), then the straight line from $y$ to $x$ lies in $X$,
hence $d_{in}(x,y)=d_{out}(x,y)$. So we can assume that $x\neq 0$, and
let $y'=\tfrac{y}{\norm{y}}\norm{x}$. Then $y'$ and $x$ lie in the
same copy $M_\epsilon$
of $M$, hence $d_{in}(x,y')\leq K_M d_{out}(x,y')$. Now $y'$ is the point
closest to $y$ on $M_\epsilon$. Hence all of $M_\epsilon-y'$ lies on
the other side of the affine hyperspace through $y'$ orthogonal to the
line $\overline{yy'}$ from $y$ to $y'$. Hence the angle between
$\overline{yy'}$ and the line $\overline{y'x}$ between $y'$ and $x$ is
more than $\tfrac{\pi}{2}$. Therefore, the Euclidean distance from $y$ to
$x$ is larger than $l(\overline{yy'})$ and $l(\overline{y'x})$. This gives
us:
\begin{align*}
d_{in}(x,y) &\leq d_{in}(x,y')+ d_{in}(y',y) \leq
K_md_{out}(x,y')+d_{out}(y',y)\\ &\leq (K_m+1)d_{out}(x,y).
\end{align*}

For the other direction, assume that $X$ is Lipschitz normally
embedded, but $M$ is not Lipschitz normally embedded. 

Since $M$ is compact the only obstructions to being Lipschitz normally
embedded are local. So let $p\in M$ be a point such that $M$ is not
Lipschitz normally embedded in a small open neighbourhood $U\subset M$ of $p$. By
Proposition \ref{product} we have that $U\times (-\epsilon,\epsilon)$ is not
Lipschitz normally embedded, where $0<\epsilon$ is much smaller than
the distance from $M$ to the origin. Now the quotient map from
$\morf{c}{M\times [0,\infty)}{X}$ induces an outer (and therefore also
inner) bilipschitz equivalence of $U\times (-\epsilon,\epsilon)$ with
$c\big(U\times (-\epsilon,\epsilon)\big)$. Since both $U$ and
$\epsilon$ can be chosen to be arbitrarily small, we have that there
does not exist any small open neighbourhood of $p\in X$ that is
Lipschitz normally embedded, contradicting that $X$ is Lipschitz
normally embedded. Hence $X$ being Lipschitz normally embedded implies
that $M$ is Lipschitz normally embedded.  

\end{proof}

\subsection*{Determinantal singularities}

Let $M_{m,n}$ be the space of $m\times n$ matrices with complex (or
real) entries. For $1\leq t\leq \min \{ m,n \}$ let $\M{t}$ denote the
\emph{model determinantal
  singularity}, that is $\M{t} = \big{\{} A\in M_{m,n} \vert \rank A <
  t \big{\}}$. $\M{t}$ is an algebraic variety, with algebraic
  structure defined by the vanishing of all $t\times t$ minors. It is
  homogeneous, and hence a real cone over its real link, it is also
  a complex cone but it is the real conical structure we will use.  
It is highly singular with the singular set of $\M{t}$ being
$\M{t-1}$. If fact the action of the group $\Gl_m\times\Gl_n$ by
conjugation insures that the decomposition $\M{t} = \bigcup_{i=1}^t
\M{i}-\M{i-1}$ is a Whitney stratification.   

Let $\morf{F}{\C^N}{M_{m,n}}$ be a map with holomorphic entries, then
  $X=F\inv (\M{t})$ is a \emph{determinantal variety} of type
  $(m,n,t)$ if $\codim X = \codim
  \M{t} = (m-t+1)(n-t+1)$. If $F(0)= 0$ we will call the germ $(X,0)$
  a \emph{determinantal singularity} of type $(m,n,t)$.

Determinantal singularities can have quite bad singularities, hence one
often restrict to the following subset with better properties:
\begin{defn}
Let $X$ be a determinantal singularity defined by a map
$\morf{F}{\C^N}{M_{m,n}}$. One says that $X$ is an \emph{essentially
  isolated determinantal singularity} (EIDS for short) if $F$ is
transversal to the strata of $\M{t}$ at all point in a punctured
neighbourhood of the origin.
\end{defn}
While an EIDS can still have very bad singularities at the origin, it
singular points away from the origin only depends on the type and $N$,
for example $X-F\inv(\M{1})$ has a nice action, and is stratified by
$X^i= F\inv(\M{i}- \M{i-1})$. A lot of interesting singularities are
EIDS, for example all ICIS are EIDS.

In proving that our singularities are Lipschitz normally embedded,
we often have to change coordinates, to get some nice matrices for our
points. Hence we need the following lemma to see that these changes of
coordinates do preserve the inequalities we are using.

\begin{lemma}\label{chageofcoordinates}
Let $V\subset M_{m,n}$ be a subset invariant under linear change of
coordinates. If $x,y\in V$ satisfy $d_{in}(x,y)\leq Kd_{out}(x,y)$,
then the same is true after any linear change of coordinates.
\end{lemma}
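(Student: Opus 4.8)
The plan is to use that a linear change of coordinates on $M_{m,n}$ is nothing but a linear automorphism $\morf{L}{M_{m,n}}{M_{m,n}}$, and as such a bilipschitz homeomorphism of the ambient Euclidean space: there are constants $0<c\le C$ (one may take $C=\norm{L}_{\mathrm{op}}$ and $c=\norm{L\inv}_{\mathrm{op}}\inv$) with $c\norm{v}\le\norm{Lv}\le C\norm{v}$ for all $v\in M_{m,n}$. If $L$ comes from an orthogonal (unitary) change of coordinates one has $c=C=1$; this is the relevant case later, since any single matrix can be brought into a normal form by a pair of orthogonal (unitary) matrices $(P,Q)$ acting by $A\mapsto PAQ$.

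First I would observe that $L$ changes the outer metric by a factor in $[c,C]$: as $d_{out}(x,y)=\norm{x-y}$ and $d_{out}(Lx,Ly)=\norm{L(x-y)}$, the bilipschitz bound gives $c\,d_{out}(x,y)\le d_{out}(Lx,Ly)\le C\,d_{out}(x,y)$, and no assumption on $V$ is needed for this. The point where invariance of $V$ is used is the inner metric: given a rectifiable curve $\morf{\gamma}{[0,1]}{V}$ from $x$ to $y$, the curve $L\circ\gamma$ runs from $Lx$ to $Ly$ and, since $L(V)=V$, stays inside $V$; its length is $\int_0^1\norm{L\gamma'(t)}\,dt\le C\,l(\gamma)$. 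Taking the infimum over all such $\gamma$ yields $d_{in}(Lx,Ly)\le C\,d_{in}(x,y)$ (running the same argument for $L\inv$, which also preserves $V$, gives the companion bound $c\,d_{in}(x,y)\le d_{in}(Lx,Ly)$, which is not actually needed below).

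Finally I would chain the estimates: from $d_{in}(x,y)\le K d_{out}(x,y)$ we get
\[
d_{in}(Lx,Ly)\le C\,d_{in}(x,y)\le CK\,d_{out}(x,y)\le \tfrac{CK}{c}\,d_{out}(Lx,Ly),
\]
so $Lx,Ly$ satisfy an inequality of the same form, with constant $CK/c$, and with exactly $K$ when $L$ is an isometry. There is no genuine obstacle in the argument; the only thing to be careful about is to arrange that the coordinate changes invoked in the main proof are isometries, so that the bilipschitz constant is preserved on the nose rather than merely up to the fixed factor $C/c$.
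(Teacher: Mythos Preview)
Your argument is correct and, in fact, more careful than the paper's own proof. The paper follows the same idea---a linear change of coordinates (written there as $x\mapsto AxB\inv$ with $(A,B)\in\Gl_m\times\Gl_n$) is a linear automorphism of $M_{m,n}$ and hence bilipschitz---but then asserts the \emph{equalities}
\[
\norm{A(x-y)B\inv}=\norm{A}\,\norm{x-y}\,\norm{B\inv}
\qquad\text{and}\qquad
l(A\gamma B\inv)=\norm{A}\,l(\gamma)\,\norm{B\inv},
\]
and from the resulting common scaling factor concludes that the very same constant $K$ survives any change of coordinates. Those equalities are false for general $A,B$ (only the submultiplicative inequalities hold), so the paper's proof as written does not establish preservation of $K$. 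Your conclusion---$CK/c$ in general, and exactly $K$ when $L$ is an isometry---is the correct statement, and your closing caveat that the coordinate changes invoked in the main argument must be orthogonal/unitary if one wants the constant preserved on the nose pinpoints precisely the point one has to watch when this lemma is applied in Lemma~\ref{generalcase}.
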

\begin{proof}
Any linear change of coordinates of $M_{m,n}$ is given by conjugation
by a pair of matrices $(A,B)\in\Gl_m\times\Gl_n$. 

First we see that the outer metric is just scale by the following:
\begin{align*}
d_{out}(AxB\inv,AyB\inv) &=\norm{A(x-y)B\inv}
=\norm{A}\norm{x-y}\norm{B\inv}\\ &=\norm{A}\norm{B\inv}d_{out}(x,y). 
\end{align*}

Second we consider the case of length of curves. Let $P^V_{x,y}:=\{
\morf{\gamma}{[0,1]}{V}\ \vert\ \gamma\text{ is a
  rectifiable curve such that } \gamma(0)=x
\text{ and } \gamma(1)=y\}$, then the conjugation $\gamma\to A\gamma B\inv$
defines a bijection of $P^V_{x,y}$ and $P^{AVB\inv}_{AxB\inv,AyB\inv} =
P^V_{AxB\inv,AyB\inv}$. Moreover, that 
$l(A\gamma B\inv)=\norm{A}l(\gamma)\norm{B\inv}$ follows from the
definition of length of a curve. Hence
\begin{align*}
d_{in}(x,y) &= \inf_{\gamma\in P_{x,y}} \big{\{} l(\gamma) \big{\}} = 
\inf_{A\gamma B\inv \in P_{AxB\inv,AyB\inv}} \bigg{\{} \frac{l(A\gamma
  B\inv)}{\norm{A}\norm{B\inv}} \bigg{\}}\\ &=
  \frac{d_{in}(AxB\inv,AyB\inv)}{\norm{A}\norm{B\inv}}.
\end{align*}
The result then follows.
\end{proof}

\section{The case of the model determinantal singularities}\label{modelcase}

In this section we prove that $\M{t}$ is Lipschitz normally
embedded. We do that by considering several cases for the position of
two points $p,q\in \M{t}$, and finding inequalities of the form
$d_{in}(p,q) \leq K d_{out}(p,q)$, where we explicitly give the value
of $K$. First we consider the simple case
where $q=0$. 

\begin{lemma}\label{0case}
Let $p\in\M{t}$ then $d_{in}(p,0)=d_{out}(p,0)$.
\end{lemma}

\begin{proof}
This follows since $\M{t}$ is conical, and hence the straight line
from $p$ to $0$ lies in $\M{t}$.
\end{proof}

The second case we consider is when $p$ and $q$ are orthogonal. This
case is not much more complicated than the case $q=0$.

\begin{lemma}\label{orthogonalcase}
Let $p,q\in\M{t}$ such that $\langle p,q\rangle = 0$. Then
$d_{in}(p,q) \leq 2d_{out}(p,q)$.
\end{lemma}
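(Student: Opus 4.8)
The plan is to connect $p$ to $q$ by a path consisting of two straight segments through a cleverly chosen intermediate point, and to estimate the length of this path against $d_{out}(p,q)$. Since $\langle p,q\rangle=0$, we have $d_{out}(p,q)^2=\norm{p-q}^2=\norm{p}^2+\norm{q}^2$. Without loss of generality assume $\norm{p}\le\norm{q}$. The natural intermediate point is $p'=\tfrac{\norm{q}}{\norm{p}}\,p$ (scaling $p$ up to have the same norm as $q$), which lies in $\M{t}$ because $\M{t}$ is a cone; if $p=0$ we are in the situation of Lemma \ref{0case} and there is nothing to prove, so assume $p\ne 0$.

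First I would use that $\M{t}$ is conical, so the straight segment from $p$ to $p'$ stays in $\M{t}$; its length is $\norm{p'-p}=\norm{q}-\norm{p}\le\norm{q}\le d_{out}(p,q)$. Hence $d_{in}(p,p')=d_{out}(p,p')\le d_{out}(p,q)$. Second, $p'$ and $q$ both have norm $\norm{q}$, so they lie on a common sphere of radius $\norm{q}$ centered at $0$; connecting them by the great-circle arc on this sphere gives a path in $\cdots$ — but one must check this arc lies in $\M{t}$. This is where I expect the real work to be, and I would instead argue as follows: the real cone structure means that for any fixed point of the link $M_\epsilon$ the whole ray through it lies in $\M{t}$; better, since $\langle p',q\rangle=\tfrac{\norm{q}}{\norm{p}}\langle p,q\rangle=0$, the points $p'$ and $q$ are orthogonal of equal norm, so the angle they subtend at the origin is $\tfrac\pi2$, and the great-circle arc between them has length $\tfrac\pi2\norm{q}$. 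To see this arc lies in $\M{t}$: parametrize it as $\gamma(\theta)=\cos\theta\, p'+\sin\theta\, q$ for $\theta\in[0,\tfrac\pi2]$; each $\gamma(\theta)$ is a linear combination of two matrices of rank $<t$, so it need not have rank $<t$ in general — this is precisely the obstacle.

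To circumvent this, I would replace the arc by the two straight segments $\overline{p'\,0}$ and $\overline{0\,q}$, both of which lie in $\M{t}$ by conicality (Lemma \ref{0case} again), with total length $\norm{p'}+\norm{q}=2\norm{q}$. Then
\begin{align*}
d_{in}(p,q)&\leq d_{in}(p,p')+d_{in}(p',0)+d_{in}(0,q)\\
&\leq \big(\norm{q}-\norm{p}\big)+\norm{p'}+\norm{q}= 3\norm{q}-\norm{p}.
\end{align*}
This is not yet good enough, since $3\norm{q}$ can exceed $2d_{out}(p,q)=2\sqrt{\norm{p}^2+\norm{q}^2}$ when $\norm p$ is small. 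The fix is to not scale $p$ up at all: go straight from $p$ to $0$ (length $\norm{p}$) and then from $0$ to $q$ (length $\norm{q}$), giving
\begin{align*}
d_{in}(p,q)\leq \norm{p}+\norm{q}\leq \sqrt{2}\,\sqrt{\norm{p}^2+\norm{q}^2}=\sqrt{2}\,d_{out}(p,q)\leq 2\,d_{out}(p,q),
\end{align*}
using the Cauchy–Schwarz inequality $\norm p+\norm q\le\sqrt2\sqrt{\norm p^2+\norm q^2}$ and orthogonality. So the whole argument reduces to Lemma \ref{0case} plus one elementary inequality; the main subtlety is recognizing that routing through the cone point, rather than along the sphere, is both legal and sharp enough. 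The constant obtained is in fact $\sqrt2$, better than the claimed $2$.
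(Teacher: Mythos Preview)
Your final argument --- routing $p\to 0\to q$ along the cone and bounding $\norm{p}+\norm{q}$ by $\sqrt{2}\,d_{out}(p,q)$ via orthogonality --- is correct and is exactly the paper's approach, except that the paper uses the cruder bound $\norm{p},\norm{q}\le d_{out}(p,q)$ (legs of a right triangle are no longer than the hypotenuse) to arrive at the constant $2$ rather than your sharper $\sqrt{2}$. The detours through the rescaled point $p'$ and the great-circle arc are unnecessary and can be deleted entirely.
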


\begin{proof}
That $\langle p,q \rangle = 0$ implies
that the line from $p$ to $q$, the line from $p$ to $0$ and the line
from $q$ to $0$ form a right triangle with the line from $p$ to
$q$ as the hypotenuse. Hence $d_{out}(p,0)\leq d_{out}(p,q)$ and
$d_{out}(q,0)\leq d_{out}(p,q)$, this the gives that:
\begin{align*}
d_{in}(p,q)\leq d_{in}(p,0)+ d_{in}(q,0) = d_{out}(p,0)+ d_{out}(q,0)
\leq 2 d_{out}(p,q).
\end{align*}
 
\end{proof}

The last case we need to consider is the case where $p$ and $q$ are
not orthogonal. This case is a little more complicated and we need to
do the proof by induction. 

\begin{lemma}\label{generalcase}
Let $p,q\in\M{t}$ such that $\langle p,q \rangle \neq 0$. Then
$d_{in}(p,q)\leq 2\rank(p)d_{out}(p,q)$.
\end{lemma}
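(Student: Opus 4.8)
The plan is to reduce the general non-orthogonal case to the orthogonal case of Lemma~\ref{orthogonalcase} by an inductive argument on $\rank(p)$, where at each step we split off a rank-one piece of $p$ and move it into alignment with (part of) $q$ without leaving $\M{t}$. By Lemma~\ref{chageofcoordinates} we are free to conjugate by $\Gl_m\times\Gl_n$, so the first step is to choose coordinates that put $p$ into a normal form — say a diagonal form with $\rank(p)$ nonzero diagonal entries, or more conveniently a form in which the row space and column space of $p$ are spanned by standard basis vectors. After such a change of coordinates, $q$ is an arbitrary matrix of rank $<t$; crucially $\rank(p) < t$ as well, so $p$ and $q$ together "occupy" at most $\rank(p) + \rank(q) \le 2(t-1)$ rows and columns, but more to the point each lies in $\M{t}$.

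The base case is $\rank(p) = 1$ (the case $\rank(p)=0$, i.e.\ $p=0$, being Lemma~\ref{0case}). Here I would produce an explicit short path in $\M{t}$ from $p$ to $q$: first rotate $p$ (within the rank-$\le 1$ matrices, hence within $\M{t}$ since $t\ge 2$ whenever $\langle p,q\rangle\ne 0$ forces $q\ne 0$) so that it becomes orthogonal to $q$, or alternatively so that it lines up with the rank-one piece of $q$ "closest" to $p$; then apply Lemma~\ref{orthogonalcase} or Lemma~\ref{0case} to the remainder. The point of the constant $2\rank(p)$ is that one such rotation costs at most a bounded multiple of $d_{out}(p,q)$: the arc swept out has length comparable to the chord, and the chord is controlled by $d_{out}(p,q)$ because $p$, $q$ both have rank $<t$ so their difference does not "see" any large cancellation. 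For the inductive step, write $p = p' + p''$ with $p''$ of rank one and $p'$ of rank $\rank(p)-1$, arranged (after the coordinate change) so that $p'$ and $p''$ have disjoint row and column supports; then go from $p$ to $p'$ along a path that shrinks $p''$ linearly to $0$ (this path stays in rank $\le \rank(p) < t$, hence in $\M{t}$, and has length $\norm{p''}\le\norm{p - q} + \norm{q''_{\text{matching}}}\lesssim d_{out}(p,q)$), then invoke the induction hypothesis on the pair $(p', q)$ — or rather on $(p', q - q'')$ after peeling a matching rank-one piece off $q$ — picking up the factor $2(\rank(p)-1)$, and finally restore the rank-one piece. Summing the lengths gives $2\rank(p)\,d_{out}(p,q)$.

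The main obstacle is bookkeeping the constant: I must make sure each rank-one "rotation" or "linear shrink" step genuinely costs at most $2\,d_{out}(p,q)$ — not $2\,d_{out}(p',q')$ for some sub-pair whose outer distance could in principle be larger than $d_{out}(p,q)$. This is the analogue of the issue handled in Proposition~\ref{product} via the observation that each coordinate-block outer distance is dominated by the total outer distance; here the corresponding fact is that when $p,q\in\M{t}$ the singular values of $p-q$ control the singular values of the individual rank-one pieces we peel off, because both matrices are rank-deficient in the same way $t$. Concretely, if $p - q$ is small in operator (or Frobenius) norm and both have rank $< t$, then after the normal-form coordinate change the entries of $q$ in the "support of $p$" are close to those of $p$, and the entries outside are small — so every piece we manipulate has norm $O(d_{out}(p,q))$. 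Verifying this uniformly, and checking that the sequence of conjugations used at different stages of the induction can be taken simultaneously (or that Lemma~\ref{chageofcoordinates} is applied afresh at each stage to an already-valid inequality), is the delicate part; the geometry of each individual move (arc length vs.\ chord, right-triangle comparisons) is elementary trigonometry exactly as in Lemmas~\ref{0case} and~\ref{orthogonalcase}.
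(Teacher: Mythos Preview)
Your overall plan---induction, coordinate changes via Lemma~\ref{chageofcoordinates}, peeling off rank one step at a time---is in the right spirit, but the specific path you describe does not give the claimed bound. The step ``shrink $p''$ linearly to $0$'' has length $\norm{p''}$, which is a singular value of $p$ and is \emph{not} controlled by $d_{out}(p,q)$: if $p$ and $q$ are close (say $\norm{p-q}=\epsilon$) and both have norm of order $1$, then $\norm{p''}$ is of order $1$ while $d_{out}(p,q)=\epsilon$. Your chain $\norm{p''}\le\norm{p-q}+\norm{q''_{\text{matching}}}\lesssim d_{out}(p,q)$ breaks at the last step, since $\norm{q''_{\text{matching}}}$ is likewise of order $1$, not $O(\epsilon)$. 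The same objection applies to your base case: rotating a rank-one $p$ inside the rank-one matrices until it becomes orthogonal to $q$ sweeps out an arc of length comparable to $\norm{p}$, again unrelated to $d_{out}(p,q)$. So the bookkeeping you flag as ``the delicate part'' is not just delicate---with these moves it actually fails.

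The paper's fix is never to send anything to zero. It puts $p$ and $q$ into a \emph{simultaneous} normal form in which $p$ has first column $(p_{11},0,\dots,0)^T$ and $q$ has first row $(q_{11},0,\dots,0)$ with $p_{11},q_{11}\neq 0$, and then moves $p$ along its first row and $q$ along its first column so that both land at points $p',q'$ of a common affine slice $H_{q_0}\cap\M{t}\cong M^{t-1}_{m-1,n-1}$, where the inductive hypothesis applies to $(p',q')$. The three displacement vectors $p-p'$, $p'-q'$, $q'-q$ live respectively in the first row, the lower-right $(m-1)\times(n-1)$ block, and the first column below the $(1,1)$ entry---three pairwise orthogonal subspaces of $M_{m,n}$---so Pythagoras gives $d_{out}(p,p'),\,d_{out}(p',q'),\,d_{out}(q',q)\le d_{out}(p,q)$ directly. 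That orthogonality, engineered by the choice of intermediate points $p',q'$ (which move $p$ and $q$ \emph{towards each other} rather than towards the origin), is the missing idea.
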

\begin{proof}
The poof is by induction in $t$ by considering $\M{t}$ as
depending on $t$ as $M^{t}_{m'+t,n'+t}$. The base case is $\M{1}=\{
0\}$, which trivially satisfies the inequality. 
So we assume the theorem is true for $M^{t-1}_{m-1,n-1}$.

By a change of coordinates we can assume that $p$ and $q$ have the
following forms:
\begin{align*}
p=\left(
\begin{array}{@{} c | c @{}}
p_{11} & \begin{matrix} p_{12} & \dots & p_{1n} \end{matrix} \\ \hline
\begin{matrix} 0 \\ \vdots \\ 0 \end{matrix} & \text{\Huge $D_p$}
\end{array}
\right)\text{ and } q=\left(
\begin{array}{@{} c | c @{}}
q_{11} & \begin{matrix} 0 & \dots & 0 \end{matrix} \\ \hline
\begin{matrix} q_{21} \\ \vdots \\ q_{m1} \end{matrix} & \text{\Huge $D_q$}
\end{array}
\right)
\end{align*}
where $p_{11},q_{11}\neq 0$ and $D_p,D_q\in M_{m-1,n-1}$. Then let
$p'$, $q'$ and $q_0$ be the following points:
\begin{align*}
p'=\left(
\begin{array}{@{} c | c @{}}
q_{11} & \begin{matrix} 0 & \dots & 0 \end{matrix} \\ \hline
\begin{matrix} 0 \\ \vdots \\ 0 \end{matrix} & \text{\Huge $D_p$}
\end{array}
\right),\ q'=\left(
\begin{array}{@{} c | c @{}}
q_{11} & \begin{matrix} 0 & \dots & 0 \end{matrix} \\ \hline
\begin{matrix} 0 \\ \vdots \\ 0 \end{matrix} & \text{\Huge $D_q$}
\end{array}
\right)\text{ and } 
q_0=\left(
\begin{array}{@{} c | c @{}}
q_{11} & \begin{matrix} 0 & \dots & 0 \end{matrix} \\ \hline
\begin{matrix} 0 \\ \vdots \\ 0 \end{matrix} & \text{\Huge $0$}
\end{array}
\right).
\end{align*}
It is clear that $p',q'\in \M{t}$, moreover the straight line
$\overline{pp'}$ from $p$ to $p'$ is in $\M{t}$, and the straight line
$\overline{qq'}$ from $q$ to $q'$ is in $\M{t}$. Hence
$d_{in}(p,p')=d_{out}(p,p')$ and $d_{in}(q,q')=d_{out}(q,q')$. Let
$H_{q_0}$ be the affine space through $q_0$ defined as
\begin{align*}
H_{q_0} := \Bigg{\{} \left(
\begin{array}{@{} c | c @{}}
q_{11} & \begin{matrix} 0 & \dots & 0 \end{matrix} \\ \hline
\begin{matrix} 0 \\ \vdots \\ 0 \end{matrix} & \text{\Huge $A$}
\end{array}
\right)\in M_{m,n} \Big\vert \text{ where } A\in M_{m-1,n-1} \Bigg{\}}.
\end{align*}
It is clear that $p',q'\in H_{q_0}$ and hence $p',q'\in H_{q_0}\bigcap
\M{t}$. Now $H_{q_0}\bigcap \M{t}$ is isomorphic to
$M^{t-1}_{m-1,n-1}$, and we get by induction  $d_{in}(p',q') \leq
2\rank D_p d_{out}(p',q') = 2(\rank p-1) d_{out}(p',q') $. 

We now have that:
\begin{align}\label{inequalitygeneric}
d_{in}(p,q) &\leq d_{in}(p,q')+ d_{in}(q',q) \leq d_{in}(p,p') +
d_{in}(p',q')+ d_{in}(q',q) \\ &\leq d_{out}(p,p') + 2(rank
p-1)d_{out}(p',q') + d_{out}(q',q).\nonumber
\end{align}
The line $\overline{pp'}$ is in the direction $p-p'$ and the line
$\overline{p'q '}$ is in the direction $q'-p'$. These direction are
\begin{align*}
p-p'=\left(
\begin{array}{@{} c | c @{}}
p_{11}-q_{11} & \begin{matrix} p_{12} & \dots & p_{1n} \end{matrix} \\ \hline
\begin{matrix} 0 \\ \vdots \\ 0 \end{matrix} & \text{\Huge $0$}
\end{array}
\right)\text{ and } q'-p'=\left(
\begin{array}{@{} c | c @{}}
0 & \begin{matrix} 0 & \dots & 0 \end{matrix} \\ \hline
\begin{matrix} 0 \\ \vdots \\ 0 \end{matrix} & \text{\Huge
  $D_q-D_p$}
\end{array}
\right),
\end{align*}
hence $\overline{pp'}$ and $\overline{p'q '}$ are orthogonal. This
implies that the straight line $\overline{pq'}$ is the hypotenuse of a
right triangle given by $p,p'$ and $q'$. We therefore have that
$d_{out}(p,p') \leq d_{out}(p,q')$ and $d_{out}(p',q') \leq
d_{out}(p,q')$. 

Likewise we have that the line $\overline{pq'}$ is in the direction
$p-q'$ and the line $\overline{qq '}$ is in the direction
$q-q'$. These direction are 
\begin{align*}
p-q'=\left(
\begin{array}{@{} c | c @{}}
p_{11}-q_{11} & \begin{matrix} p_{12} & \dots & p_{1n} \end{matrix} \\ \hline
\begin{matrix} 0 \\ \vdots \\ 0 \end{matrix} & \text{\Huge $D_p-D_q$}
\end{array}
\right)\text{ and } q-q'=\left(
\begin{array}{@{} c | c @{}}
0 & \begin{matrix} 0 & \dots & 0 \end{matrix} \\ \hline
\begin{matrix} q_{21} \\ \vdots \\ q_{m1} \end{matrix} & \text{\Huge
  $0$}
\end{array}
\right),
\end{align*}
so $\overline{pq'}$ and $\overline{qq '}$ are orthogonal. Hence we
have that $p,q$ and $q'$ form a right triangle with $\overline{pq}$ as
hypotenuse, which implies that $d_{out}(p,p') \leq d_{out}(p,q')$ and
$d_{out}(p',q') \leq d_{out}(p,q')$. When we combine this with the
previous 
paragraph it follows that $d_{out}(p,p'), d_{out}(q',p'), d_{out}(q,q') \leq
d_{out}(p,q)$, and then using this in inequality
\eqref{inequalitygeneric} the result follows. 

\end{proof}

We have now considered all possible pairs of $p,q$, and we can then
combine the results to get the main theorem.

\begin{thm}
The model determinantal singularity $\M{t}$ is Lipschitz normally
embedded, with a bilipschitz constant $2t-2$. 
\end{thm}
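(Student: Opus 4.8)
The plan is to obtain the theorem purely by assembling the three preceding lemmas. The key observation is that every matrix in $\M{t}$ has rank at most $t-1$, so the rank-dependent bound of Lemma \ref{generalcase} is uniformly controlled by $2(t-1)=2t-2$; and the orthogonal case of Lemma \ref{orthogonalcase} already gives the even better constant $2$. So a single dichotomy on whether the inner product $\langle p,q\rangle$ vanishes will cover all pairs $p,q\in\M{t}$.

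In detail: first dispose of $t=1$, where $\M{1}=\{0\}$ is a single point and the statement is vacuous (any $K>1$ is a bilipschitz constant). For $t\geq 2$ one has $2t-2\geq 2>1$, so $2t-2$ is admissible in the sense of the definition. Now fix $p,q\in\M{t}$. If $\langle p,q\rangle=0$, then Lemma \ref{orthogonalcase} gives
\[
d_{in}(p,q)\leq 2d_{out}(p,q)\leq (2t-2)d_{out}(p,q),
\]
the last inequality using $t\geq 2$; this subsumes the case $p=0$ or $q=0$. If instead $\langle p,q\rangle\neq 0$, then $p\neq 0$, and Lemma \ref{generalcase} combined with $\rank(p)\leq t-1$ gives
\[
d_{in}(p,q)\leq 2\rank(p)d_{out}(p,q)\leq (2t-2)d_{out}(p,q).
\]
Since the two cases are exhaustive, inequality \eqref{lneeq} holds with $K=2t-2$ for every pair of points of $\M{t}$, which is exactly the assertion.

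I do not anticipate any genuine difficulty: all the real work has been front-loaded into Lemmas \ref{0case}, \ref{orthogonalcase} and \ref{generalcase} (the first of which is not even invoked directly here, having already been used inside the proofs of the other two). Insofar as there is a delicate point, it is the passage from the non-uniform constant $2\rank(p)$ of Lemma \ref{generalcase} to the global constant $2t-2$, which rests only on the elementary fact that $\rank(p)\le t-1$ for $p\in\M{t}$; and the bookkeeping remark that the stated constant is meaningful only for $t\geq 2$, the case $t=1$ being degenerate.
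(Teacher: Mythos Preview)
Your proof is correct and follows essentially the same approach as the paper: a dichotomy on whether $\langle p,q\rangle=0$, invoking Lemma~\ref{orthogonalcase} in the orthogonal case and Lemma~\ref{generalcase} together with $\rank(p)\le t-1$ in the non-orthogonal case. Your additional remarks on the degenerate case $t=1$ and on the constant $2t-2$ being at least $2$ for $t\ge2$ are minor bookkeeping details that the paper leaves implicit.
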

\begin{proof}
Let $p,q\in\M{t}$. If $\langle p,q \rangle=$ then $d_{in}(p,q)\leq
2d_{out}(p,q)$ by Lemma
\ref{orthogonalcase}. If $\langle p,q \rangle \neq 0$ then
$d_{in}(p,q)\leq 2\rank(p)d_{out}(p,q)$ by Lemma
\ref{generalcase}. Hence in all cases $d_{in}(p,q)\leq (2t-2)d_{out}(p,q)$
since $\rank p\leq t-1$.
\end{proof}

\section{The general case}\label{secgeneralcase}

The case of a general determinantal singularity is much more difficult
that the case of a model one. One can in general not expect a determinantal
singularity to be Lipschitz normally embedded, the easiest way to see
this is to note that all ICIS are determinantal, and that there are
many ICIS that are not Lipschitz normally embedded. For example
among the simple surface singularities $A_n$, $D_n$, $E_6$, $E_7$ and
$E_8$ only the $A_n$'s are Lipschitz normally embedded. Since the
structure of determinantal singularities does not give us any new
tools to study ICIS, we will probably not be able to say when an ICIS
is Lipschitz normally embedded. This means that since $F\inv(\M{1})$
is often an ICIS, we probably have to assume it is Lipschitz normally
embedded to say 
anything about whether $F\inv(\M{t})$ is Lipschitz normally
embedded. But before we discuss such assumption further, we will give
some examples of determinantal singularities that fails to be
Lipschitz normally embedded. 

\begin{ex}\label{degenerationofcusps}
Let $X$ be the determinantal singularity of type $(3,3,3)$ given by
the following map $\morf{F}{\C^3}{M_{3,3}}$:
\begin{align*}
F(x,y,z)=\left(
\begin{array}{@{} c c c @{}}
x & 0 & z \\ 
y & x & 0 \\ 
0 & y & x
\end{array}
\right).
\end{align*}
Since this is a linear embedding of $\C^3$ into $\C^9$, one can see
$X$ as an intersection of a linear subspace and $M^{3}_{3,3}$. Hence
one would expect it to be a nice space. On the other hand
$X=V(x^3-y^2z)$, hence it is a family of cusps degeneration to a
line, or seeing an other way as a cone over a cusp. But $X$ being
Lipschitz normally embedded would imply that the cusp $x^3-y^2=0$ 
is Lipschitz normally embedded by
Proposition \ref{cone}, which we know it is
not by the work of Pham and Teissier \cite{phamteissier}.
\end{ex}   

Notice that in the Example \ref{degenerationofcusps},
$X^1=F\inv(M^{1}_{3,3})$ is a point and $X^1=F\inv(M^{2}_{3,3})$ is
a line, so both $X^1$ and $X^2$ are Lipschitz normally embedded. So it
does not in general follows that if $X^i$ is Lipschitz normally
embedded then $X^{i+1}$ is. Now the singularity in Example
\ref{degenerationofcusps} is not an EIDS, $F$ is not transverse to the
strata of $M^3_{3,3}$ at points on the $z$-axis. In the next example
we will see that EIDS is not enough either.

\begin{ex}[Simple Cohen-Macaulay codimensional 2 surface singularities]\label{scmc2ss}
In \cite{fruhbiskrugerneumer} Fr\"uhbis-Kr\"uger and Neumer classify
simple Cohen-Macaulay codimension 2 singularities. They are all EIDS of
type $(3,2,2)$, and the surfaces correspond to the
rational triple points classified by Tjurina \cite{tjurina}. We will
look closer at two of such families. First we have the family
given by the matrices:
\begin{align*}
\left(
\begin{array}{@{} c c c @{}}
z & y+w^l & w^m \\ 
w^k & y & x
\end{array}
\right).
\end{align*} 
This family corresponds to the family of triple points in
\cite{tjurina} called $A_{k-1,l-1,m-1}$. Tjurina shows that the dual
resolution graph of their minimal resolution are:
$$
\xymatrix@R=6pt@C=24pt@M=0pt@W=0pt@H=0pt{
&&\\
\overtag{\Circ}{-2}{8pt}\dashto[rr] &
{\hbox to 0pt{\hss$\underbrace{\hbox to 80pt{}}$\hss}}&
\overtag{\Circ}{-2}{8pt}\lineto[r] &
\overtag{\Circ}{-3}{8pt}\lineto[r]\lineto[d] &
\overtag{\Circ}{-2}{8pt}\dashto[rr] &
{\hbox to 0pt{\hss$\underbrace{\hbox to 80pt{}}$\hss}}&
\overtag{\Circ}{-2}{8pt}\\
&{k-1}&& \righttag{\Circ}{-2}{8pt}\dashto[dddd] &&{l-1}\\
&&&&\\
&&&&\blefttag{\quad}{m-1\begin{cases} \quad \\
    \ \\ \ \end{cases}}{10pt} & \\
&&&&\\
&&& \righttag{\Circ}{-2}{8pt} & .\\
&&}$$ 
Using Remark 2.3 of \cite{spivakovsky} we see that these singularities
are minimal, and hence by the result of \cite{normallyembedded} we get
that they are Lipschitz normally embedded.

The second family is given by the matrices:
\begin{align*}
\left(
\begin{array}{@{} c c c @{}}
z & y+w^l & xw \\ 
w^k & x & y
\end{array}
\right).
\end{align*} 
Tjurina calls this family $B_{2l,k-1}$ and give the dual resolution
graphs of their minimal resolutions as:
$$
\xymatrix@R=6pt@C=24pt@M=0pt@W=0pt@H=0pt{
&&&& \overtag{\Circ}{-2}{8pt} &\\
&&&&&\\
\overtag{\Circ}{-2}{8pt}\dashto[rr] &
{\hbox to 0pt{\hss$\underbrace{\hbox to 65pt{}}$\hss}}&
\overtag{\Circ}{-2}{8pt}\lineto[r] &
\overtag{\Circ}{-3 \hspace{7pt}}{8pt}\lineto[r] &
\overtag{\Circ}{-2 \hspace{20pt}}{8pt}\lineto[r] \lineto[uu]&
\overtag{\Circ}{-2}{8pt}\dashto[rr]
&{\hbox to 0pt{\hss$\underbrace{\hbox to 80pt{}}$\hss}}&
\overtag{\Circ}{-2}{8pt}\\
&2l&&& &&k-3.& \\
&&}$$ 
Following Spivakovsky this is not a minimal singularity, and since it
is rational according to Tjurina it is not Lipschitz normally embedded
by the result of \cite{normallyembedded}.

These two families do not look very different but one is Lipschitz
normally embedded and the other is not. We can do the same for all the
Cohen-Macaulay codimension 2 surfaces, and using the results in
\cite{normallyembedded} that rational surface singularities are
Lipschitz normally embedded if and only if they are minimal, we get
that only the family $A_{l,k,m}$ is Lipschitz normally
embedded. This is similar to the case of codimension 1, since only the
$A_n$ singularities are Lipschitz normally embedded among the simple
singularities. 
\end{ex}

So as we see in Example \ref{scmc2ss} being and EIDS with singular
set Lipschitz normally embedded, is not enough to ensure the variety is
Lipschitz normally embedded. One should notice that the varieties
in Example \ref{degenerationofcusps} and \ref{scmc2ss} are both
defined by maps $\morf{F}{\C^N}{M_{m,n}}$ where $N<mn$. This
means that one should think of the singularity as a section of
$\M{t}$, but being a subspace of a Lipschitz normally embedded space
does not imply the Lipschitz normally embedded condition. 
If $N\geq mn$ then 
one can think about the singularity being a fibration over $\M{t}$,
and as we saw in Proposition \ref{product} products of Lipschitz
normally embedded spaces are Lipschitz normally embedded. Now in this
case $X^1=F\inv(\M{1})$ is ICIS if $X$ is an EIDS, which means that we
probably can not say anything general about whether it is Lipschitz
normally embedded or not. So natural assumptions would be to assume
that $X$ is an EIDS and that $X^1$ is Lipschitz normally embedded.

\section*{Acknowledgements}
The authors would like to thank Anne Pichon for first letting us know
about Asuf Shachar's question on Mathoverlfow.org, and later pointing
out a mistake in the proof of the first version of Proposition
\ref{generalcase}. We would also like to thank Lev Birbrair for
sending us an early version of the paper by Katz, Katz, Kerner,
Liokumovich and Solomon \cite{kerneretc}, and encouraging us to work on
the problem. The first author was supported by FAPESP grant
2015/08026-4 and the second author was partially supported by FAPESP
grant 2014/00304-2 and CNPq grant 306306/2015-8.

\bibliography{general}

\newcommand{\etalchar}[1]{$^{#1}$}
\def\cprime{$'$}
\begin{thebibliography}{NBOOT13}

\bibitem[BF08]{birbrairfernandes}
Lev Birbrair and Alexandre Fernandes.
\newblock Inner metric geometry of complex algebraic surfaces with isolated
  singularities.
\newblock {\em Comm. Pure Appl. Math.}, 61(11):1483--1494, 2008.

\bibitem[BFLS16]{lipschitzregularity}
Lev Birbrair, Alexandre Fernandes, D{\~u}ng~Tr{\'a}ng L{\^e}, and J.~Edson
  Sampaio.
\newblock Lipschitz regular complex algebraic sets are smooth.
\newblock {\em Proc. Amer. Math. Soc.}, 144(3):983--987, 2016.

\bibitem[BNP14]{thickthin}
Lev Birbrair, Walter~D. Neumann, and Anne Pichon.
\newblock The thick-thin decomposition and the bilipschitz classification of
  normal surface singularities.
\newblock {\em Acta Math.}, 212(2):199--256, 2014.

\bibitem[DP14]{damonpike}
James Damon and Brian Pike.
\newblock Solvable groups, free divisors and nonisolated matrix singularities
  {II}: {V}anishing topology.
\newblock {\em Geom. Topol.}, 18(2):911--962, 2014.

\bibitem[Fer03]{fernandesplanecurve}
Alexandre Fernandes.
\newblock Topological equivalence of complex curves and bi-{L}ipschitz
  homeomorphisms.
\newblock {\em Michigan Math. J.}, 51(3):593--606, 2003.

\bibitem[FKN10]{fruhbiskrugerneumer}
Anne Fr{\"u}hbis-Kr{\"u}ger and Alexander Neumer.
\newblock Simple {C}ohen-{M}acaulay codimension 2 singularities.
\newblock {\em Comm. Algebra}, 38(2):454--495, 2010.

\bibitem[FZ15]{fruhbiskrugerzach}
Anne {Fruehbis-Krueger} and Matthias {Zach}.
\newblock {On the Vanishing Topology of Isolated Cohen-Macaulay Codimension 2
  Singularities}.
\newblock {\em ArXiv e-prints}, January 2015.

\bibitem[GR15]{gaffenyrangachev}
Terence {Gaffney} and Antoni {Rangachev}.
\newblock {Pairs of modules and determinantal isolated singularities}.
\newblock {\em ArXiv e-prints}, December 2015.

\bibitem[GZ{\`E}09]{ebelingguseinzade}
Sabir~M. Guse{\u\i}n-Zade and Wolfgang {\`E}beling.
\newblock On the indices of 1-forms on determinantal singularities.
\newblock {\em Tr. Mat. Inst. Steklova}, 267(Osobennosti i
  Prilozheniya):119--131, 2009.

\bibitem[KKKLS16]{kerneretc}
Karin~U. {Katz}, Mikhail~G. {Katz}, Dmitry {Kerner}, Yevgeny {Liokumovich}, and
  Jake~P. {Solomon}.
\newblock {Determinantal variety and bilipschitz equivalence}.
\newblock {\em ArXiv e-prints}, February 2016.

\bibitem[MP98]{macphersonprocesi}
Robert MacPherson and Claudio Procesi.
\newblock Making conical compactifications wonderful.
\newblock {\em Selecta Math. (N.S.)}, 4(1):125--139, 1998.

\bibitem[NBOOT13]{NunoOreficeOkamotoTomazella}
Juan~J. Nu{\~n}o-Ballesteros, Bruna Or{\'e}fice-Okamoto, and Jo{\~a}o~N.
  Tomazella.
\newblock The vanishing {E}uler characteristic of an isolated determinantal
  singularity.
\newblock {\em Israel J. Math.}, 197(1):475--495, 2013.

\bibitem[NP14]{zariski}
Walter~D. Neumann and Anne Pichon.
\newblock Lipschitz geometry of complex surfaces: analytic invariants and
  equisingularity.
\newblock {\em arxiv:1211.4897}, 2014.

\bibitem[NPP15]{normallyembedded}
Walter~D. Neumann, Helge~M\o{}ller Pedersen, and Anne Pichon.
\newblock Minimal surface singularities are {L}ipschitz normally embedded.
\newblock {\em arxiv:1503.03301}, 2015.

\bibitem[PT69]{phamteissier}
Fr{\'e}d{\'e}ric Pham and Bernard Teissier.
\newblock {Fractions Lipschitziennes d'une alg{\`e}bre analytique complexe et
  saturation de Zariski, par Fr{\'e}d{\'e}ric Pham et Bernard Teissier}.
\newblock 42 pages. Ce travail est la base de l'expos{\'e} de Fr{\'e}d{\'e}ric
  Pham au Congr{\`e}s International des Math{\'e}maticiens, Nice 1970., June
  1969.

\bibitem[Spi90]{spivakovsky}
Mark Spivakovsky.
\newblock Sandwiched singularities and desingularization of surfaces by
  normalized {N}ash transformations.
\newblock {\em Ann. of Math. (2)}, 131(3):411--491, 1990.

\bibitem[SRDSP14]{cedinhamiriam}
Maria~Aparecida Soares~Ruas and Miriam Da~Silva~Pereira.
\newblock Codimension two determinantal varieties with isolated singularities.
\newblock {\em Math. Scand.}, 115(2):161--172, 2014.

\bibitem[Tju68]{tjurina}
Galina~N. Tjurina.
\newblock Absolute isolation of rational singularities, and triple rational
  points.
\newblock {\em Funkcional. Anal. i Prilo\v zen.}, 2(4):70--81, 1968.

\end{thebibliography}

\end{document}